\newtheorem{thm}{Theorem}
\newtheorem{prop}[thm]{Proposition}
\newtheorem{cor}[thm]{Corollary}
\theoremstyle{definition}
\newtheorem{dfn}[thm]{Definition}
\newtheorem{rem}[thm]{Remark}
\newcommand{\F}{\mathbf{F}}
\newcommand{\oo}{\mathcal{O}}
\newcommand{\Proj}{\mathrm{Proj}}
\begin{document}
\title{On the level of a Calabi--Yau hypersurface}
\date{}
\author{Stiofáin Fordham}
\address{School of Mathematics and Statistics, University College Dublin,
    Dublin D04 N2E5, Ireland.}
\thanks{The author thanks Adrian Langer for a number of useful comments,
    and also A.F.\ Boix, O.\ Gregory, M.\ Katzman. The author is supported
    by SFI grant 13/IA/1914.}
\email{stiofain.fordham@ucdconnect.ie}
\begin{abstract}
    Boix--De Stefani--Vanzo defined the notion of level for a smooth projective
    hypersurface over a finite field in terms of the stabilisation of a chain
    of ideals previously considered by \`Alvarez-Montaner--Blickle--Lyubeznik,
    and showed that in the case of an elliptic curve the level is 1 if and only
    if it is ordinary and 2 otherwise. Here we extend their theorem to the case
    of Calabi--Yau hypersurfaces by relating their level to the $F$-jumping
    exponents of Blickle--Mustaţă--Smith and the Hartshorne--Speiser--Lyubeznik
    numbers of Mustaţă--Zhang.
\end{abstract}
\maketitle

%%%%%%%%%%%%%%%%%%%%%%%%%%%%%%%%%%%%%%%%%%%%%%%%%%%%%%%%%%%%%%%%%%%%%%%%%%%%%%
%%%%%%%%%%%%%%%%%%%%%%%%%%%%%%%%%%%%%%%%%%%%%%%%%%%%%%%%%%%%%%%%%%%%%%%%%%%%%%
%%%%%%%%%%%%%%%%%%%%%%%%%%%%%%%%%%%%%%%%%%%%%%%%%%%%%%%%%%%%%%%%%%%%%%%%%%%%%%

\section{Introduction}

Let $R$ be a finitely-generated $k$-algebra where $k$ is a perfect field of
characteristic $p>2$ and let $F^e\colon R \rightarrow
R$ denote the Frobenius morphism $a\mapsto a^{p^e}$. Let $F_\ast^e R$ denote
$R$ with the $R$-module structure $r_1\cdot r_2\coloneqq r_1^{p^e}r_2$, then
$F^e\colon R \rightarrow F_\ast^e R$ is an $R$-module homomorphism. A ring $R$
is said to {\it $F$-finite} if $F_\ast R$ is finitely-generated as an
$R$-module.

Let $D_R$ denote the ring of $k$-linear differential operators on $R$ (see
\cite[IV, \S16]{ega4} or \cite[Chp.\ 1]{yeku} for definition and further
details). Each element $\delta \in D_R$ of order $\le p^e-1$ is $F_\ast^e
R$-linear, so in particular
\begin{equation}\label{eqn:Dlinear}
D_R\subseteq    \bigcup_{e\ge 1} D_R^{(e)},
\end{equation}
where $D_R^{(e)}=\mathrm{End}_{R^{p^e}}(R)$, and since $R$ is a finitely-generated
$k$-algebra with $k$ perfect, then $R$ is $F$-finite and thus
(\ref{eqn:Dlinear}) is actually an equality \cite[Chp.\ 1, \S4]{yeku}.

For an ideal $\mathfrak b\subset R$ and integer $e>0$, write $\mathfrak
b^{[1/p^e]}$ for the smallest ideal $J$ such that $\mathfrak b \subseteq
J^{[p^e]}$ where $J^{[a]}\coloneqq (x^a\colon x\in J)$\footnote{The equivalent
notation for $\mathfrak{b}^{[1/p^e]}$ used in \cite{ambl} and \cite{bdsv}
is $I_e(\mathfrak{b})$---the notation used here is the one of \cite{bms08}
and \cite{bms09}.}. For an element $f\in R$, we have by \cite[Lem.\ 3.4]{ambl}
a descending chain of ideals
\begin{equation}\label{eqn:chain}
    R=(f^{p^0-1})^{[1/p^0]}\supseteq(f^{p-1})^{[1/p]} \supseteq
    (f^{p^2-1})^{[1/p^2]}\supseteq (f^{p^3-1})^{[1/p^3]}\supseteq \ldots,
\end{equation}
that stabilises \cite[Thm.\ 3.7]{ambl}, and it stabilises at $e$ (i.e.\
$(f^{p^e-1})^{[1/p^e]}=(f^{p^{e+1}-1})^{[1/p^{e+1}]}=\ldots$) if and only if
there exists $\delta\in D_R^{(e+1)}$ such that $\delta(f^{p^e-1})=f^{p^e-p}$
\cite[Prop.\ 3.5]{ambl}.

\begin{rem}
    \`Alvarez-Montaner--Blickle--Lyubeznik \cite{ambl} considered the above
    chain of ideals in connection with the question of determining the minimum
    integer $i$ such that $1/f^i$ generates $R_f$ as a $D_R$-module which is
    known to be finite (where the $D_R$-structure on $R_f$ is `via the
    quotient-rule'). In the case of a field of characteristic 0 and $f$ a
    non-zero polynomial, then the integer $i$ is related in a non-trivial way
    to the Bernstein--Sato polynomial for $f$. In the case of a
    field of positive characteristic and $f\in R$ a non-zero polynomial, then
    one always has $i=1$ \cite[Thm.\ 1.1]{ambl}.
\end{rem}

\begin{dfn}
    Let $R$ be a finitely-generated $k$-algebra where $k$ is a perfect field of
    characteristic $p>2$. The {\it level} of $f\in R$ is defined to be $e+1$,
    where $e$ is the integer where the chain (\ref{eqn:chain}) stabilises. 
\end{dfn}

Given $f\in R$ a homogeneous polynomial, one can consider the projective
hypersurface $X$ defined by the vanishing of $f$. The level of $f$ was shown to
be connected with the geometry of $X$ in the case of an elliptic curve, in
the following

\begin{thm}[Boix--De Stefani--Vanzo, {\cite[Thm.\ 1.1]{bdsv}}]\label{bdsv}
    Let $R=k[x_0,x_1,x_2]$ where $k$ is a perfect field of characteristic $p>2$,
    and let $X=\mathrm{Proj}(R/fR)$ define an elliptic curve. Then the level of
    $f$ is 1 if and only if $X$ is ordinary, and 2 otherwise.
\end{thm}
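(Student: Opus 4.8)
The plan is to prove that the chain~(\ref{eqn:chain}) always stabilises by the stage $e=1$---so that the level is automatically $1$ or $2$---and then to decide which of the two occurs by inspecting a single coefficient of $f^{p-1}$, which turns out to be, up to a nonzero scalar, the Hasse invariant of $X$.

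\emph{Step 1 (the level is at most $2$).} I would first show that for every nonzero homogeneous cubic $f\in R=k[x_0,x_1,x_2]$ the chain~(\ref{eqn:chain}) stabilises at $e=1$. By \cite[Prop.\ 3.5]{ambl} this amounts to exhibiting $\delta\in D_R^{(2)}=\mathrm{End}_{R^{p^2}}(R)$ with $\delta(f^{p-1})=f^{p-p}=1$. Here $R$ is free over $R^{p^2}=k[x_0^{p^2},x_1^{p^2},x_2^{p^2}]$ on the monomials $x^{\mathbf b}$ with $0\le b_i<p^2$, and since $p>2$ we have $\deg(f^{p-1})=3(p-1)<p^2$, so every monomial of $f^{p-1}$ has all of its exponents $<p^2$ and is therefore one of these basis elements. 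Writing $f^{p-1}=\sum_{\mathbf b}\lambda_{\mathbf b}x^{\mathbf b}$ with $\lambda_{\mathbf b}\in k$ and fixing $\mathbf b_0$ with $\lambda_{\mathbf b_0}\neq 0$, the $R^{p^2}$-linear map that sends $x^{\mathbf b_0}\mapsto\lambda_{\mathbf b_0}^{-1}$ and every other basis monomial to $0$ carries $f^{p-1}$ to $1$; this is the desired $\delta$. Hence the chain is constant from stage $e=1$ onwards, so the level is $1$ or $2$, and it equals $1$ exactly when the chain is already constant from stage $e=0$, i.e.\ exactly when $(f^{p-1})^{[1/p]}=R$.

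\emph{Step 2 (which case occurs).} Because $f^{p-1}$ is homogeneous, $(f^{p-1})^{[1/p]}$ is a homogeneous ideal; a proper homogeneous ideal of $R$ lies in $\mathfrak m=(x_0,x_1,x_2)$, so $(f^{p-1})^{[1/p]}=R$ if and only if $f^{p-1}\notin\mathfrak m^{[p]}=(x_0^p,x_1^p,x_2^p)$, equivalently if and only if $f^{p-1}$ has a monomial all of whose exponents are $\le p-1$. Such a monomial has degree $\le 3(p-1)$, with equality forcing it to be $(x_0x_1x_2)^{p-1}$; as $f^{p-1}$ is homogeneous of degree $3(p-1)$, it therefore has such a monomial precisely when the coefficient $h$ of $(x_0x_1x_2)^{p-1}$ in $f^{p-1}$ is nonzero. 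I would then invoke the classical fact that, for the smooth plane cubic $X=\Proj(R/fR)$, this $h$ is up to a nonzero scalar the Hasse invariant---the matrix of the ($p^{-1}$-linear) Frobenius action on the one-dimensional space $H^1(X,\oo_X)$---so that $h\neq 0$ if and only if $X$ is ordinary. Combining the two steps: the level of $f$ is $1$ $\iff$ $(f^{p-1})^{[1/p]}=R$ $\iff$ $h\neq 0$ $\iff$ $X$ is ordinary; and in all remaining cases (those in which $X$ is supersingular) the level is $2$.

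The step I expect to carry the actual content is the last identification in Step 2, namely that the coefficient of $(x_0x_1x_2)^{p-1}$ in $f^{p-1}$ computes the Hasse invariant of $X$. This is where smoothness of $X$---and the fact that its genus is $1$, so that $H^1(X,\oo_X)$ is one-dimensional---is used in an essential way; for a singular or reducible plane cubic the comparison between the Cartier operator on $X$ and the multinomial expansion of $f^{p-1}$ degenerates, and ``ordinary'' is not even defined. By contrast Step 1 rested only on the numerical coincidence $3(p-1)<p^2$; for a Calabi--Yau hypersurface of dimension $\ge 2$ this fails, the level can exceed $2$, and one must instead track the stabilisation through the $F$-jumping exponents of \cite{bms08,bms09} and the Hartshorne--Speiser--Lyubeznik numbers---which is the task of the rest of the paper.
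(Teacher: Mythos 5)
The theorem you are proving is quoted from Boix--De Stefani--Vanzo and the paper does not reproduce its proof; so there is no ``paper proof'' to match against, only the alternative route the paper develops in \S2--\S3. Your argument is correct and is essentially the direct approach of \cite{bdsv} in a streamlined form. Step~1 is sound: $R$ is free over $R^{p^2}$ on the monomials with exponents $<p^2$, the inequality $3(p-1)<p^2$ holds for every prime, so $f^{p-1}$ is supported on basis monomials and one can project onto any one of them to produce $\delta\in D_R^{(2)}$ with $\delta(f^{p-1})=1=f^{p-p}$; by \cite[Prop.\ 3.5]{ambl} the chain stabilises by $e=1$ and the level is $1$ or $2$. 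Step~2 correctly reduces ``level $1$'' to $f^{p-1}\notin\mathfrak m^{[p]}$ (this is Fedder's criterion), identifies the only degree-$3(p-1)$ monomial with all exponents $\le p-1$ as $(x_0x_1x_2)^{p-1}$, and appeals to the classical identification of its coefficient in $f^{p-1}$ with the Hasse invariant for a smooth plane cubic.

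The paper's own route, which subsumes this theorem as the $n=1$ case of its final corollary, is genuinely different: it translates the level into the largest simple $F$-jumping exponent via \cite[Lem.\ 2.1]{bms09} (Prop.~\ref{fje}), decides level $1$ versus level $\ge 2$ through Bhatt--Singh's theorem on the $F$-pure threshold of a Calabi--Yau hypersurface (Prop.~\ref{gen}), and caps the level at $2$ by showing the Hartshorne--Speiser--Lyubeznik number equals $1$ (Prop.~\ref{pr:main}), which in turn uses the Grothendieck--Serre correspondence and van der Geer--Katsura's description of the height via Frobenius on truncated Witt-vector cohomology. Your argument is shorter and more self-contained for curves, but--as you yourself observe at the end--it hinges on the coincidence $3(p-1)<p^2$, which breaks as soon as $\deg f=n+2\ge 4$; the paper's $F$-jumping/HSL machinery is exactly what replaces that numerical accident in higher dimension. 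One small point worth making explicit in Step~1: you use that $k=k^{p^2}\subseteq R^{p^2}$ (i.e.\ perfection of $k$) when you move the coefficients $\lambda_{\mathbf b}$ across the $R^{p^2}$-linear map; this is harmless here but is the place where the hypothesis ``$k$ perfect'' enters.
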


\begin{rem}
    The above result has been partially generalised to the case of
    hyperelliptic curves in \cite{bbfy}.
\end{rem}

The objective here is to extend the above theorem to the case of Calabi--Yau
hypersurfaces, and in the process to describe the relation between level and
the $F$-jumping exponents and the Hartshorne--Speiser--Lyubeznik numbers.

%%%%%%%%%%%%%%%%%%%%%%%%%%%%%%%%%%%%%%%%%%%%%%%%%%%%%%%%%%%%%%%%%%%%%%%%%%%%%%
%%%%%%%%%%%%%%%%%%%%%%%%%%%%%%%%%%%%%%%%%%%%%%%%%%%%%%%%%%%%%%%%%%%%%%%%%%%%%%
%%%%%%%%%%%%%%%%%%%%%%%%%%%%%%%%%%%%%%%%%%%%%%%%%%%%%%%%%%%%%%%%%%%%%%%%%%%%%%

\section{\texorpdfstring{$F$}{F}-jumping exponents and the level of a
Calabi--Yau variety}
Here we outline the relationship between the level of a polynomial $f$ and the 
$F$-jumping exponents of the ideal of $f$.

Let $R$ be a regular ring of characteristic $p>0$. For an ideal $\mathfrak a$
in $R$ and positive real number $\lambda$, and $e\ge1$ then by \cite[Lem.\
2.8]{bms08} we have
\[
    (\mathfrak a^{\lceil \lambda p^e \rceil})^{[1/p^e]} \subseteq (\mathfrak
    a^{\lceil \lambda p^{e+1} \rceil} )^{[1/p^{e+1}]},
\]
where for a real number $r$, we set $\lceil r \rceil$ to be the smallest
integer $\ge r$. The ring $R$ is Noetherian so the chain stabilises to a limit
ideal for large $e$ and we set $\tau(\mathfrak{a}^\lambda)$ to be this limit
called the {\it generalised test ideal of $\mathfrak a$ with exponent
$\lambda$}. It is known that for every $\lambda$ there is $\varepsilon>0$ such
that $\tau(\mathfrak a^\lambda)=\tau(\mathfrak a^{\lambda'})$ for $\lambda'\in
[\lambda, \lambda+\varepsilon)$ \cite[Cor.\ 2.16]{bms08}. A positive $\lambda$
is called an {\it $F$-jumping exponent} of $\mathfrak a$ if $\tau(\mathfrak
a^\lambda)\ne \tau(\mathfrak a^{\lambda'})$ for all $\lambda'<\lambda$ (if $\mu
< \lambda$ then one has $\tau(\mathfrak a^\mu)\supseteq \tau(\mathfrak
a^\lambda)$). Call an $F$-jumping exponent {\it simple} if it lies in the
interval $(0,1]$. If $\mathfrak a$ is a principal ideal then it follows from
\cite[Prop.\ 2.25]{bms08} that
$\tau(\mathfrak{a}^\lambda)=\tau(\mathfrak{a}^{\lambda+1})$ so in our situation
it is enough to look at the simple $F$-jumping exponents.

\begin{rem}
    These should be regarded as the characteristic-$p$ analogues of the related
    notions from birational geometry, e.g.\ the log canonical threshold, the
    jumping coefficients of Ein--Lazarsfeld--Smith--Varolin \cite{elsv} etc.
\end{rem}

\begin{prop}\label{fje}
  Let $R$ be an $F$-finite ring of characteristic $p>2$ and let $I=(f)$ be a
  principal ideal. Let $e$ be the largest simple $F$-jumping exponent of $I$.
  Then the level of $f$ is $\lceil 1-\log_p (1-e)\rceil$.
\end{prop}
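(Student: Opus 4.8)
The plan is to identify the chain (\ref{eqn:chain}) with a sampling of the generalised test ideal $\tau((f)^{\lambda})$ — concretely, with the sequence $\bigl(\tau((f)^{1-p^{-m}})\bigr)_{m\ge0}$ — and then to read off its stabilisation index from the $F$-jumping exponents of $(f)$.

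The first and principal step is the identity
\[
    (f^{p^{m}-1})^{[1/p^{m}]}=\tau\bigl((f)^{1-p^{-m}}\bigr)\qquad(m\ge0),
\]
the right-hand side being read as $R$ when $m=0$. For $\lambda=1-p^{-m}$ the number $\lambda p^{m+j}=p^{m+j}-p^{j}$ is a non-negative integer for every $j\ge0$, so for $i\ge m$ the $i$-th term of the sequence $\bigl((f)^{\lceil\lambda p^{i}\rceil}\bigr)^{[1/p^{i}]}$ whose limit defines $\tau((f)^{\lambda})$ equals
\[
    \bigl(f^{p^{i}-p^{i-m}}\bigr)^{[1/p^{i}]}
    =\Bigl(\bigl(f^{p^{m}-1}\bigr)^{[p^{i-m}]}\Bigr)^{[1/p^{i}]}
    =(f^{p^{m}-1})^{[1/p^{m}]},
\]
using the transitivity $\mathfrak b^{[1/p^{a+b}]}=(\mathfrak b^{[1/p^{a}]})^{[1/p^{b}]}$ and the cancellation $(\mathfrak b^{[p]})^{[1/p]}=\mathfrak b$; the latter holds because the Frobenius on a regular ring is flat, so that $J\mapsto J^{[p]}$ reflects inclusions. (This is the only point at which regularity enters, and it is of course available in the applications, e.g.\ $R=k[x_{0},\dots,x_{n}]$.) Thus the defining sequence of $\tau((f)^{1-p^{-m}})$ is already constant from index $m$ onwards, with value $(f^{p^{m}-1})^{[1/p^{m}]}$, which is the asserted identity.

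Granting this, the chain (\ref{eqn:chain}) is
\[
    R=\tau((f)^{0})\supseteq\tau((f)^{1-p^{-1}})\supseteq\tau((f)^{1-p^{-2}})\supseteq\cdots,
\]
that is, $\tau((f)^{\lambda})$ sampled along $\lambda=1-p^{-m}\nearrow1$. Since $\lambda\mapsto\tau((f)^{\lambda})$ is decreasing, right-continuous, and locally constant away from the $F$-jumping exponents, the tail of the chain from index $m$ onwards is constant exactly when the open interval $(1-p^{-m},1)$ contains no $F$-jumping exponent. The chain samples $\tau$ only at arguments strictly less than $1$, so a jump at the endpoint $\lambda=1$ never registers in it; the stabilisation is thus governed by the largest $F$-jumping exponent lying in $(0,1)$ — taken to be $0$ if there is none — which, by the periodicity of $F$-jumping exponents for principal ideals (\cite[Prop.\ 2.25]{bms08}), is the number $e$ appearing in the statement. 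The condition above then reads $1-p^{-m}\ge e$; the least such $m$ is $m_{0}=\lceil-\log_{p}(1-e)\rceil$ (equal to $0$ when $e=0$), and the chain does not stabilise any sooner: when $m_{0}\ge1$ the exponent $e$ is itself an $F$-jumping exponent lying in $(1-p^{-(m_{0}-1)},1)$, so that $\tau((f)^{1-p^{-(m_{0}-1)}})\supsetneq\tau((f)^{e})=\tau((f)^{1-p^{-m_{0}}})$. Therefore the level of $f$ is $m_{0}+1=\lceil-\log_{p}(1-e)\rceil+1=\lceil1-\log_{p}(1-e)\rceil$.

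I expect the only genuine obstacle to be this first step, the passage from the chain of \cite{ambl} to the test ideals of \cite{bms08}; once it is in place, the rest is monotonicity of $\tau$ together with a one-line logarithm estimate. The single point demanding care is the endpoint $\lambda=1$: any jump occurring there is invisible to the chain, which is precisely why the level is controlled by the largest simple $F$-jumping exponent \emph{strictly below} $1$ — equivalently, why the level equals $1$ exactly when $1$ is the only simple $F$-jumping exponent, the ``ordinary'' situation in the hypersurface applications to come.
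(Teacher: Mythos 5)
Your argument is correct and takes essentially the same route as the paper: identify the chain \eqref{eqn:chain} with the sequence $\tau\bigl((f)^{1-p^{-m}}\bigr)$ and read off the stabilisation index from the $F$-jumping exponents. Two points of comparison are worth recording. First, where the paper simply cites \cite[Lem.\ 2.1]{bms09} for the identity $(f^{p^m-1})^{[1/p^m]}=\tau\bigl((f)^{1-p^{-m}}\bigr)$, you rederive it from the definitions via transitivity of $\mathfrak b\mapsto\mathfrak b^{[1/p^e]}$ and the cancellation $(\mathfrak b^{[p]})^{[1/p]}=\mathfrak b$; the latter genuinely requires flatness of Frobenius (Kunz), so your proof, like the paper's whole section~2, implicitly assumes $R$ regular even though the proposition as printed only says ``$F$-finite.'' Second, and more substantively, you handle the endpoint $\lambda=1$ with more care than the printed proof. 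For a nonzero non-unit $f$ in a regular ring one always has $\tau((f)^1)=(f)\ne R$ by Skoda, so $1$ \emph{is} a simple $F$-jumping exponent in the paper's sense ($(0,1]$), and the literal reading of ``the largest simple $F$-jumping exponent'' would force $e=1$ and an undefined $\log_p(0)$; correspondingly, the printed proof's assertion that there is no jump ``in $(\lambda_{m-1},1]$'' cannot hold at the right endpoint. Your reading---$e$ is the largest jump in the open interval $(0,1)$, taken to be $0$ if there is none, since the chain never samples $\tau$ at $\lambda=1$---is what is actually proved and what Remark~\ref{rem} and the applications require, so the statement is mildly imprecise and you have corrected it. Your verification that the chain does not stabilise before index $m_0=\lceil-\log_p(1-e)\rceil$ (using that $e$ itself is a jump in $(1-p^{-(m_0-1)},1)$ when $m_0\ge1$) is also a small but genuine addition: the paper asserts the two-sided characterisation without spelling out the lower bound.
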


\begin{proof}
  Set $\lambda_k=1-\frac{1}{p^k}$, then
  $\tau(I^{\lambda_k})=(f^{p^k-1})^{[1/p^k]}$ \cite[Lem.\ 2.1]{bms09}. Since
  the level is well-defined (i.e.\
  $(f^{p^k-1})^{[1/p^k]}=(f^{p^{k+1}-1})^{[1/p^{k+1}]}$ for $k\gg 0$), thus
  $\tau(I^{\lambda_k})=\tau(I^{\lambda_{k+1}})$ for $k\gg 0$ hence the number
  $e$ is well-defined. Let the level of $f$ be $m$, so
  $(f^{p^{m-1}-1})^{[1/p^{m-1}]}=(f^{{p^m}-1})^{[1/p^m]}$ and thus
  $\tau(\mathfrak a^{\lambda_{m-1}}) = \tau(\mathfrak a^{\lambda_m})$. Hence
  there is an $F$-jumping exponent in the interval
  $(\lambda_{m-2},\lambda_{m-1}]$ and none in $(\lambda_{m-1},1]$. It is
  straightforward to see that the function $f(x)\coloneqq \lceil
  1-\log_p(1-x)\rceil$ is constant on intervals $(\lambda_{m-1},\lambda_m]$ and
  equal to $m+1$. The result follows.
\end{proof}

\begin{rem}\label{rem}
  In particular, if the level of $f$ is 1 then there is no $F$-jumping exponent
  for $I=(f)$ in $(0,1)$ and if the level of $f$ is $>2$ then there is an
  $F$-jumping exponent for $I$ in $(1-1/p,1)$.
\end{rem}

\subsection{Calabi--Yau hypersurfaces} 
A {\it Calabi--Yau variety} is a smooth projective variety $X$ of
dimension $n$, over a field of characteristic $p>0$ with trivial canonical
bundle such that $\dim \mathrm{H}^i(X,\oo_X)=0$ for $i=1,\dots,n-1$.
Following Artin--Mazur \cite{am}, one considers the functor
$\mathrm{AM}_X\colon \mathrm{Art}_k\rightarrow \mathrm{Ab}$ defined by
\[
    \mathrm{AM}_X(S)\coloneqq \ker\big(F_\ast\colon
    \mathrm{H}_{\text{\'et}}^n(X\times S,\mathbb{G}_m)\rightarrow
    \mathrm{H}_{\text{\'et}}^n(X,\mathbb{G}_m)\big),
\]
on the category $\mathrm{Art}_k$ of local Artinian $k$-algebras $S$ with
residue field $k$. The functor is pro-representable by a smooth formal group of
dimension 1 \cite[II, \S2]{am}, which is characterised by a number called the
{\it height} which can be any positive integer or infinity. We say that a
Calabi--Yau variety is {\it ordinary} if the aforementioned formal group has
height 1. In the case that $X$ is an elliptic curve then this agrees with the
other definition.

\begin{prop}\label{gen}
    Let $X=\mathrm{Proj}(R/fR)$ be a Calabi--Yau hypersurface over a perfect
    field of characteristic $p>0$. Then $f$ has level 1 if and only if $X$ is
    ordinary.
\end{prop}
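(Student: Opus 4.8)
The plan is to translate the condition ``level of $f$ is $1$'' into a statement about $F$-jumping exponents using Proposition~\ref{fje}, and then to connect the $F$-singularities of the affine cone to the ordinarity of $X$ via a Fedder-type criterion. By Proposition~\ref{fje} and Remark~\ref{rem}, $f$ has level $1$ if and only if there is no $F$-jumping exponent for $I=(f)$ in the interval $(0,1)$, equivalently $\tau(R,f^{\lambda})=R$ for all $\lambda<1$, equivalently (since the only possible simple $F$-jumping exponent left is $1$) the pair $(R,f)$ is $F$-pure; one sees this because the test ideal $\tau(R,f^{1-\varepsilon})$ stabilises to $R$ precisely when $f^{p-1}\notin\mathfrak m^{[p]}$ in each localisation, which is Fedder's criterion for $F$-purity of $R/fR$. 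So the first step is: \emph{$f$ has level $1$ $\iff$ $R/fR$ is $F$-pure (equivalently $F$-split, as $R/fR$ is a graded Calabi--Yau, hence Cohen--Macaulay, domain).}

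The second and main step is to identify $F$-splitting of the cone $R/fR$ with ordinarity of $X=\Proj(R/fR)$. Write $A=R/fR$, so $A$ is a graded ring with $X=\Proj A$ a Calabi--Yau $n$-fold, $n=\dim X$. The key input is that, because $X$ has trivial canonical bundle and $\mathrm H^i(X,\oo_X)=0$ for $0<i<n$, the top local cohomology $\mathrm H^{n+1}_{\mathfrak m}(A)$ in degree $0$ is one-dimensional and is canonically $\mathrm H^n(X,\oo_X)$, on which Frobenius acts; $A$ is $F$-split iff the Frobenius action on $\mathrm H^{n+1}_{\mathfrak m}(A)$ is injective (this is the local-cohomology form of Fedder's criterion for a Cohen--Macaulay graded ring, cf.\ the discussion in \cite{bdsv} for the elliptic-curve case), and by the vanishing of the intermediate cohomology this is equivalent to injectivity of Frobenius on $\mathrm H^n(X,\oo_X)$. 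On the other hand, by Artin--Mazur \cite{am} the height of the formal group $\mathrm{AM}_X$ is $1$ precisely when Frobenius acts bijectively (equivalently, injectively, as this is a $1$-dimensional $k$-vector space) on $\mathrm H^n(X,\oo_X)$ — this is exactly the Serre/Artin--Mazur description of ordinarity via the action of $F$ on $\mathrm H^n(X,\oo_X)\cong\mathrm H^{n+1}_{\text{\'et}}$ in the relevant degree. Chaining these equivalences gives: $f$ has level $1$ $\iff$ $A$ is $F$-split $\iff$ $F$ injective on $\mathrm H^n(X,\oo_X)$ $\iff$ $X$ ordinary.

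I expect the main obstacle to be the bookkeeping in the second step: carefully setting up the graded-local-duality identification of the socle of $\mathrm H^{n+1}_{\mathfrak m}(A)$ with $\mathrm H^n(X,\oo_X)$ and checking that the natural Frobenius actions on the two sides match, and likewise pinning down that the Artin--Mazur height-$1$ condition is literally the bijectivity of $F$ on $\mathrm H^n(X,\oo_X)$ rather than on some other cohomology group. The Fedder-criterion half ($F$-splitness $\Leftrightarrow$ Frobenius injective on top local cohomology for a graded CM ring) is standard, and the reduction from level to $F$-splitness is immediate from Proposition~\ref{fje} once one observes that for a principal ideal the only candidate simple $F$-jumping exponent below the trivial one governs $F$-purity of the quotient; the Calabi--Yau hypotheses are used precisely to collapse ``$F$ injective on top local cohomology'' to ``$F$ injective on $\mathrm H^n(X,\oo_X)$'' and to invoke Artin--Mazur, so the heart of the argument is verifying that these hypotheses do the needed collapsing.
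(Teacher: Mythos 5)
Your outline is correct and reaches the right conclusion, but where the paper cites, you re-derive. Both arguments begin identically: via Proposition~\ref{fje} and Remark~\ref{rem}, level $1$ is translated into ``no $F$-jumping exponent of $(f)$ in $(0,1)$,'' i.e.\ the $F$-pure threshold of $f$ is $1$. At that point the paper simply invokes Bhatt--Singh \cite[Thm.\ 1.1]{bs}, whose statement already gives ``$F$-pure threshold $=1$ $\iff$ $X$ ordinary'' for a Calabi--Yau hypersurface, and stops. You instead unpack that citation: $F$-pure threshold $=1$ $\iff$ $R/fR$ is $F$-pure (Fedder) $\iff$ Frobenius is injective on $\mathrm H^{n+1}_{\mathfrak m}(R/fR)$ (for a Gorenstein ring, $F$-pure $=$ $F$-injective) $\iff$ Frobenius is injective on the degree-$0$ piece $\mathrm H^n(X,\oo_X)$ $\iff$ $X$ has Artin--Mazur height $1$. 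This is not a different idea so much as a reproof of the special case of \cite[Thm.\ 1.1]{bs} that is actually used; the gain is self-containment, the cost is that you are re-establishing a cited theorem and must therefore carry out the Grothendieck--Serre and Fedder bookkeeping that the paper delegates.

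One inaccuracy worth flagging: you attribute the reduction to the degree-$0$ piece to ``the vanishing of the intermediate cohomology,'' but that is not the mechanism. What the Calabi--Yau hypothesis on degrees gives you is $\deg f=n+1$, hence $a$-invariant $0$, hence $\mathrm H^{n+1}_{\mathfrak m}(R/fR)_i\cong\mathrm H^n(X,\oo_X(i))$ vanishes for $i>0$ (Serre duality with $\omega_X\cong\oo_X$) and the degree-$0$ piece is the socle. The negative-degree pieces do \emph{not} vanish. The reason injectivity on the socle suffices is essentiality: $\mathrm H^{n+1}_{\mathfrak m}(R/fR)$ is the injective hull of $k$, so a nonzero kernel of Frobenius would meet the socle; thus the Fedder condition $f^{p-1}\notin\mathfrak m^{[p]}$, which is exactly Frobenius-injectivity in degree $0$, already forces injectivity everywhere. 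The vanishing of $\mathrm H^i(X,\oo_X)$ for $0<i<n$ is part of the Calabi--Yau definition but does no work in this step.
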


\begin{proof}
     If $f$ has level one then from remark \ref{rem} there is no $F$-jumping
     number in $(0,1)$ hence by \cite[Thm.\ 1.1]{bs} $X$ is ordinary and the reverse
     implication holds as well.
\end{proof}

%%%%%%%%%%%%%%%%%%%%%%%%%%%%%%%%%%%%%%%%%%%%%%%%%%%%%%%%%%%%%%%%%%%%%%%%%%%%%%
%%%%%%%%%%%%%%%%%%%%%%%%%%%%%%%%%%%%%%%%%%%%%%%%%%%%%%%%%%%%%%%%%%%%%%%%%%%%%%
%%%%%%%%%%%%%%%%%%%%%%%%%%%%%%%%%%%%%%%%%%%%%%%%%%%%%%%%%%%%%%%%%%%%%%%%%%%%%%

\section{The Hartshorne--Speiser--Lyubeznik number}

\begin{dfn}[\cite{MusZha}]
  Let $R$ be a Noetherian local ring of characteristic $p>0$, let $M$ be an
  Artinian $R$-module and let $\varphi\colon M \rightarrow M$ be an additive
  map satisfying $\varphi(am)=a^p\varphi(m)$ for $a\in R$ and $m\in M$. For
  integers $i\ge1$, define a chain of ascending submodules
  \[
    N_i=\{z\in M\colon \varphi^i(z)=0\},
  \]
  then a theorem of Lyubeznik \cite[Prop.\ 4.4]{Ly97} implies that the chain
  stabilises: $N_\ell=N_{\ell+1}$ for $\ell\gg 1$. The {\it
  Hartshorne--Speiser--Lyubeznik number} of the pair $(M,\varphi)$ is the
  smallest positive integer $\ell$ such that $N_\ell=N_{\ell+j}$ for all $j\ge
  1$.
\end{dfn}

Now let $R=k[x_0,\dots,x_n]$ where $k$ is a perfect field of characteristic
$p>0$ and let $f\in R$ be non-zero and $\mathfrak m=(x_0,\dots,x_n)$.

\begin{prop}[{\cite[Prop.\ 5.7 and Cor.\ 5.8]{MusZha}}]
  The Hartshorne--Speiser--Lyubeznik number of
  $(\mathrm{H}_\mathfrak{m}^{n+1}(R/fR),\Theta)$ is the smallest positive
  integer $\ell$ such that
  \[
  \tau ((f)^{1-\frac{1}{p^\ell}})=\tau((f)^{1-\frac{1}{p^{\ell+i}}}),
  \]
  for all $i\ge1$, where $\Theta$ denotes the map induced on the top local
  cohomology of $R/fR$ at the origin by the Frobenius morphism.
\end{prop}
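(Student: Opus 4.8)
The plan is to transport the question, via Matlis duality, from the Frobenius action $\Theta$ on the local cohomology of $R/fR$ to the Cartier operators on $R$ that compute the ideals $(f^{p^e-1})^{[1/p^e]}$. First work locally at $\mathfrak m$ (and, if one likes, complete); this is harmless, since local cohomology, the formation of $\tau$, and the operation $\mathfrak b\mapsto\mathfrak b^{[1/p^e]}$ all commute with localisation and completion \cite{bms08}. Let $E$ denote the injective hull of the residue field of $R_{\mathfrak m}$, equivalently the top local cohomology of $R$. The short exact sequence $0\to R\xrightarrow{f}R\to R/fR\to 0$, together with the vanishing of the lower local cohomology of the regular ring $R$, identifies $\mathrm{H}^{n+1}_{\mathfrak{m}}(R/fR)$ with $(0:_E f)=\Hom_R(R/fR,E)$, and under this identification $\Theta$ is the restriction to $(0:_E f)$ of the map $\eta\mapsto f^{p-1}F_E(\eta)$, where $F_E\colon E\to E$ is the natural ($p$-linear) Frobenius; the extra factor $f^{p-1}$ is forced because Frobenius does not commute with multiplication by $f$, and the identification is standard (compare the proof of Fedder's criterion). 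Iterating gives $\Theta^e=f^{p^e-1}F_E^e$ on $(0:_E f)$, so
\[
  N_e=\ker\Theta^e=\{\eta\in(0:_E f)\colon f^{p^e-1}F_E^e(\eta)=0\}.
\]

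Now dualise. For the regular $F$-finite local ring $R$ one has $\Hom_R(F^e_\ast R,R)\cong F^e_\ast R$, a rank-one free module generated by the trace (Fedder) operator $\Phi^e$, and Matlis duality interchanges $e$-fold Frobenius actions on Artinian modules with $e$-fold Cartier actions on Noetherian modules. Concretely $(0:_E f)^\vee\cong R/fR=:S$, and the dual of $\Theta^e$ is the $p^{-e}$-linear map $\kappa^e\colon F^e_\ast S\to S$, $\bar g\mapsto\overline{\Phi^e(f^{p^e-1}g)}$, which is the $e$-fold iterate of $\kappa\coloneqq\kappa^1$. Applying $\Hom_R(-,E)$ to $0\to N_e\to(0:_E f)\xrightarrow{\Theta^e}(0:_E f)$ and using $(F^e_\ast M)^\vee\cong F^e_\ast(M^\vee)$ yields $N_e^\vee\cong\operatorname{coker}\bigl(\kappa^e\colon F^e_\ast S\to S\bigr)$. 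The ideals $\kappa^e(F^e_\ast S)$ of $S$ form a descending chain, so by faithfulness of Matlis duality
\[
  N_\ell=N_{\ell+1}\iff\kappa^\ell(F^\ell_\ast S)=\kappa^{\ell+1}(F^{\ell+1}_\ast S).
\]

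It remains to read the right-hand side in terms of test ideals. By the trace description $\mathfrak b^{[1/p^e]}=\Phi^e(F^e_\ast\mathfrak b)$ one has $\kappa^e(F^e_\ast S)=\overline{(f^{p^e-1})^{[1/p^e]}}=\overline{\tau\bigl((f)^{1-1/p^e}\bigr)}$ in $S=R/(f)$, using \cite[Lem.\ 2.1]{bms09}. Moreover $(f^{p^e})\subseteq(f^{p^e-1})$, so $(f)=(f^{p^e})^{[1/p^e]}\subseteq(f^{p^e-1})^{[1/p^e]}=\tau\bigl((f)^{1-1/p^e}\bigr)$, the first equality by flatness of Frobenius over the regular ring $R$; hence every term of the descending chain $\{\tau((f)^{1-1/p^e})\}_{e\ge1}$ contains $(f)$, and two of them coincide if and only if their images in $R/(f)$ do. Therefore $N_\ell=N_{\ell+1}$ if and only if $\tau((f)^{1-1/p^\ell})=\tau((f)^{1-1/p^{\ell+1}})$. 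Finally, the standard argument for Frobenius chains ($z\in N_{\ell+2}\Rightarrow\Theta z\in N_{\ell+1}=N_\ell\Rightarrow z\in N_{\ell+1}$, and so on) shows $N_\ell=N_{\ell+1}$ forces $N_\ell=N_{\ell+e}$ for all $e\ge1$, and dually $\tau((f)^{1-1/p^\ell})=\tau((f)^{1-1/p^{\ell+1}})$ forces $\tau((f)^{1-1/p^\ell})=\tau((f)^{1-1/p^{\ell+e}})$ for all $e\ge1$. Putting these together, the smallest positive $\ell$ with $N_\ell=N_{\ell+e}$ for all $e$ equals the smallest positive $\ell$ with $\tau((f)^{1-1/p^\ell})=\tau((f)^{1-1/p^{\ell+e}})$ for all $e$, which is the assertion.

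The step I expect to be the real work is making the Frobenius--Cartier correspondence precise: one must check, compatibly with iteration, that the Matlis dual of the $p^e$-linear operator $\Theta^e$ on $(0:_E f)$ is the $p^{-e}$-linear operator $\Phi^e(f^{p^e-1}(-))$ on $R/fR$. This rests on the self-duality $\Hom_R(F^e_\ast R,R)\cong F^e_\ast R$ of the Frobenius pushforward for $R$ regular and $F$-finite, and on keeping careful track of the twists $F^e_\ast$. The remaining ingredients --- the identification of $\Theta$ with $f^{p-1}F_E$, the formula $\mathfrak b^{[1/p^e]}=\Phi^e(F^e_\ast\mathfrak b)$, the containment $(f)\subseteq\tau((f)^{1-1/p^e})$, and the chain-stabilisation bookkeeping --- are routine.
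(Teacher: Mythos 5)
The paper does not prove this proposition at all; it cites it verbatim from Mustaţă--Zhang (\cite[Prop.\ 5.7 and Cor.\ 5.8]{MusZha}), so there is no in-paper argument to compare yours against. That said, your Matlis-duality approach --- identifying the top nonzero local cohomology of $R/fR$ with $(0:_E f)$ via the long exact sequence for $0\to R\xrightarrow{f} R\to R/fR\to 0$, recognising the induced Frobenius action as $\eta\mapsto f^{p-1}F_E(\eta)$ so that $\Theta^e=f^{p^e-1}F_E^e$, dualising to the Cartier-type operator $\kappa^e(\bar g)=\overline{\Phi^e(f^{p^e-1}g)}$ on $S=R/fR$, and reading off its image as $\overline{(f^{p^e-1})^{[1/p^e]}}=\overline{\tau((f)^{1-1/p^e})}$ together with $(f)\subseteq\tau((f)^{1-1/p^e})$ to lift equalities from $S$ to $R$ --- is the standard route and is, to the best of my knowledge, essentially the argument of the cited reference. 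I believe the proof is correct.

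Two small points you should make explicit rather than leave implicit. First, the assertion that the images $\kappa^e(F^e_\ast S)$ form a descending chain (which you need in order to convert $N_\ell=N_{\ell+1}$ into an equality of images under Matlis duality) rests on the factorisation $\kappa^{e+1}=\kappa^e\circ F^e_\ast\kappa$; this is true but requires pulling a power of $f$ through $\Phi$ by $p^{-1}$-linearity, namely $\Phi(f^{p^{e+1}-1}g)=\Phi\bigl((f^{p^e-1})^p\cdot f^{p-1}g\bigr)=f^{p^e-1}\Phi(f^{p-1}g)$, so that $\Phi^{e+1}(f^{p^{e+1}-1}g)=\Phi^e\bigl(f^{p^e-1}\Phi(f^{p-1}g)\bigr)$. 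Second, a cosmetic remark you have silently inherited from the paper: with $R=k[x_0,\dots,x_n]$ as stated, $\mathrm{H}^{n+1}_{\mathfrak m}(R/fR)$ vanishes by Grothendieck vanishing and the module actually meant is the top nonzero one, $\mathrm{H}^{n}_{\mathfrak m}(R/fR)\cong(0:_E f)$; your identification is for the correct module, so this off-by-one in the index does not affect the argument, but it is worth flagging so a reader does not think $(0:_E f)$ has been confused with $E/fE$.
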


\begin{prop}\label{pr:main}
    Let $X=\Proj(R/fR)$ be a Calabi--Yau hypersurface and $p>n^2-n-1$. Then the
    Hartshorne--Speiser--Lyubeznik number of $X$ is 1.
\end{prop}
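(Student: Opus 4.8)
The plan is to reduce the statement to a bound on the largest simple $F$-jumping exponent of $(f)$, via the Mustaţă--Zhang description of the Hartshorne--Speiser--Lyubeznik number recalled just above. By that proposition, the HSL number of $X$ equals the smallest $\ell\ge 1$ with $\tau((f)^{1-1/p^\ell})=\tau((f)^{1-1/p^{\ell+i}})$ for all $i\ge 1$; equivalently, it is controlled by the level of $f$, which by Proposition~\ref{fje} is $\lceil 1-\log_p(1-e)\rceil$ where $e$ is the largest simple $F$-jumping exponent. Unwinding, the HSL number is $1$ precisely when $e\le 1-1/p$, i.e.\ when there is no $F$-jumping exponent of $(f)$ strictly between $1-1/p$ and $1$. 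So the task becomes: show that under the hypothesis $p>n^2-n-1$, a Calabi--Yau hypersurface $X=\Proj(R/fR)\subset\mathbf{P}^n$ has no $F$-jumping exponent of $f$ in the open interval $(1-1/p,1)$.

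To get at this I would use the fact that the $F$-jumping exponents of a principal ideal $(f)$ cutting out a hypersurface in affine space can be read off from the $F$-pure threshold and, more precisely, that the only possible accumulation toward $1$ is governed by the singularities of $X$: if $X$ is smooth (which it is, being Calabi--Yau) then $f$ is $F$-pure, the $F$-pure threshold of $f$ is $1$, and the jumping exponents in $(0,1]$ are dictated by the local structure. The key quantitative input should be a comparison of the largest simple $F$-jumping exponent with the Castelnuovo--Mumford regularity or the degree $d$ of $f$. For a smooth hypersurface of degree $d$ in $\mathbf{P}^n$ the adjunction/Calabi--Yau condition forces $d=n+1$, and one expects the largest jumping exponent below $1$ to be bounded by something like $1-c/d$ for an explicit constant; combined with $d=n+1$ this gives a bound of the form $1-c/(n+1)$, and one checks that $p>n^2-n-1$ forces $1-1/p$ to lie above this bound, hence no jumping exponent in $(1-1/p,1)$.

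More concretely, I would argue as follows. By Proposition~\ref{gen}, if $f$ has level $1$ then $X$ is ordinary and there is nothing in $(0,1)$ at all, so the HSL number is trivially $1$; the content is the non-ordinary case. There, one knows from the theory of $F$-jumping exponents of homogeneous polynomials that the simple jumping exponents are rational numbers whose denominators divide a power of $p$ times a bounded integer, and in fact for a smooth projective hypersurface the relevant bound on the ``second'' jumping exponent (the largest one strictly less than $1$) can be extracted from estimates on the Frobenius action on $\mathrm{H}^{n+1}_{\mathfrak m}(R/fR)$ — precisely the module appearing in Proposition~\ref{pr:main}. The Artin--Mazur height being finite (say $h$) forces the relevant Frobenius power to kill the socle after $h$ steps; translating ``$h$ steps'' into the language of test ideals via the Mustaţă--Zhang proposition, and using the degree constraint $d=n+1$ together with $p>n^2-n-1$, should pin the HSL number to $1$.

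The main obstacle will be making the numerical comparison between the largest simple $F$-jumping exponent and $1-1/p$ genuinely rigorous: one needs an \emph{a priori} upper bound on that jumping exponent of the form $1-\delta$ with $\delta$ depending only on $n$ (not on $p$), and the clean way to obtain $\delta\ge 1/(n^2-n-1)$-ish is where the hypothesis $p>n^2-n-1$ must be used honestly, presumably through a bound on the degrees of generators of $\tau((f)^{1-1/p})$ versus $\tau((f)^{1-1/p^2})$, i.e.\ on the regularity of the test ideals, in terms of $\deg f=n+1$. Once that degree/regularity bound is in hand, the inequality $p>n^2-n-1 \Rightarrow 1-1/p > 1-\delta$ is elementary, and the HSL$=1$ conclusion follows by the dictionary above. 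I expect the proof to be short modulo citing the appropriate effective bound on jumping exponents of smooth hypersurfaces; if no such bound is available off the shelf, the fallback is a direct socle computation in $\mathrm{H}^{n+1}_{\mathfrak m}(R/fR)$ using the explicit Frobenius action, which is more laborious but self-contained.
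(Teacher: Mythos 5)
Your first paragraph correctly identifies the dictionary: via Mustaţă--Zhang and Proposition~\ref{fje}, the claim is equivalent to showing that $(f)$ has no $F$-jumping exponent in the open interval $(1-1/p,1)$. That reduction is right and is indeed what drives the statement. But the strategy you propose for the remaining step has a gap that would not close.

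You want an \emph{a priori} upper bound on the largest simple $F$-jumping exponent of the form $1-\delta$ with $\delta$ depending only on $n$ (e.g.\ $\delta\approx c/(n+1)$), and then compare against $1-1/p$. No such $p$-independent bound exists: for a non-ordinary Calabi--Yau hypersurface of finite height, the simple $F$-jumping exponents lie in $[1-h/p,\,1-1/p]$, so the largest one is typically exactly $1-1/p$, which tends to $1$ as $p\to\infty$. The inequality you want is therefore the \emph{equality} $e=1-1/p$ (or $e=0$ in the ordinary case), not a crude bound pinned to $n$. The hypothesis $p>n^2-n-1$ does not enter by giving you room between $1-\delta$ and $1-1/p$; it enters in a different place entirely.

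The paper's actual argument is more structural. Using the Grothendieck--Serre correspondence one writes $\mathrm{H}^{n+1}_{\mathfrak m}(R/fR)\cong\bigoplus_i\mathrm{H}^n(X,\oo_X(i))$ as a graded, Frobenius-compatible isomorphism. The hypothesis $p>n^2-n-1$ is then fed into Bhatt--Singh's Theorem~3.5, which says Frobenius acts \emph{injectively} on the negative graded part; this is the one place the bound on $p$ is used, and it is not about jumping exponents per se. The problem thereby collapses to the Frobenius action on the single graded piece $\mathrm{H}^n(X,\oo_X)$. There the van der Geer--Katsura characterisation of the height (Lemma~4.4 of \cite{vdGK00}, valid for Calabi--Yau varieties by \cite{CY}) gives a clean dichotomy: if $X$ is ordinary the $F$-pure threshold is $1$ and there is nothing to do; if the height is $>1$ then Frobenius is already zero on $\mathrm{H}^n(X,\oo_X)$, so the nilpotency order is $1$, and HSL $=1$ follows. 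Your ``fallback'' of a direct socle computation in $\mathrm{H}^{n+1}_{\mathfrak m}(R/fR)$ is pointing in the right direction, but without the Bhatt--Singh injectivity input on the negative graded part (where $p>n^2-n-1$ is actually needed) and the height characterisation in degree zero, the computation does not go through.
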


\begin{proof}
The Grothendieck--Serre correspondence \cite[Thm.\ 20.4.4]{BS13} yields
\[
  \bigoplus_{i\in \mathbf{Z}}
  \mathrm{H}^n(X,\oo_X(i))\cong
  \mathrm{H}^{n+1}_\mathfrak m(R/fR),
\]
which is a graded isomorphism and Frobenius compatible. Assume that $p>n^2-n-1$
then \cite[Thm.\ 3.5]{bs} implies that Frobenius acts injectively on the
negative graded part of the top local cohomology. Hence if one is interested in
the Hartshorne--Speiser--Lyubeznik number of $X$ then one only need consider
the powers of Frobenius acting on $\mathrm H^n(X,\oo_X)$.

If $X$ is ordinary then since the $F$-pure threshold is 1 by \cite[Thm.\
1.1]{bs}, the result follows. Otherwise suppose $X$ has height $h>1$. Then
\cite[Lem.\ 4.4]{vdGK00} implies that the height of $X$ is the smallest integer
$i$ such that the Frobenius map $F\colon \mathrm{H}^n(X,W_i\mathcal
O_X)\rightarrow \mathrm{H}^n(X, W_i\oo_X)$ is non-zero (here $W_i\oo_X$ is the
sheaf of truncated Witt vectors of length $i$ of $\oo_X$)\footnote{The result
proved is for the case of K3 surfaces, but as noted in \cite[Pg.\ 2]{CY},
the same proof goes through in the higher-dimensional case.}.  But when this
map is zero then the maps $F\colon \mathrm{H}^n(X,W_j\mathcal O_X)\rightarrow
\mathrm{H}^n(X, W_j\oo_X)$ are zero for all $1\le j < i$. In particular the
Frobenius action on $\mathrm{H}^n(X,\oo_X)\rightarrow \mathrm{H}^n(X,\oo_X)$ is
zero so the result follows in this case also.
\end{proof}

\begin{cor}
    Let $p>n^2-n-1$ then the simple $F$-jumping exponents of a non-ordinary
    Calabi--Yau hypersurface $X=\Proj(R/fR)$ lie in the interval
    $[1-h/p,1-1/p]$, where $h$ is the order of vanishing of the Hasse invariant
    on the versal deformation space of $X$.
\end{cor}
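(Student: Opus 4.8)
The plan is to combine Proposition~\ref{pr:main} with Propositions~\ref{fje} and~\ref{gen} to force the level of $f$ to be exactly $2$ (which yields the upper endpoint $1-1/p$), and then to invoke the computation of the $F$-pure threshold of a Calabi--Yau hypersurface in \cite{bs} for the lower endpoint $1-h/p$. For the first part: since $p>n^2-n-1$, Proposition~\ref{pr:main} gives that the Hartshorne--Speiser--Lyubeznik number of $X$ is $1$, so by \cite[Prop.\ 5.7 and Cor.\ 5.8]{MusZha} one has $\tau((f)^{1-1/p})=\tau((f)^{1-1/p^{1+i}})$ for all $i\ge 1$; using $\tau((f)^{1-1/p^k})=(f^{p^k-1})^{[1/p^k]}$ from \cite[Lem.\ 2.1]{bms09}, this says exactly that the chain (\ref{eqn:chain}) stabilises already at $e=1$, so $f$ has level $\le 2$. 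As $X$ is not ordinary, Proposition~\ref{gen} rules out level $1$, so the level is exactly $2$.

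By Proposition~\ref{fje} the largest simple $F$-jumping exponent $e$ of $I=(f)$ then satisfies $\lceil 1-\log_p(1-e)\rceil=2$, i.e.\ $1<1-\log_p(1-e)\le 2$, which forces $1-e\ge 1/p$, so $e\le 1-1/p$. Hence every simple $F$-jumping exponent of $X$ is $\le 1-1/p$; this is consistent with the fact recorded in the proof of Proposition~\ref{fje} that, for level $2$, there is no $F$-jumping exponent in $(1-1/p,1]$.

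For the lower endpoint, recall that since $R$ is regular (and $F$-finite) and $I=(f)$ is principal, one has $\tau(I^\lambda)=R$ precisely for $0\le\lambda<\mathrm{fpt}(f)$, so the smallest simple $F$-jumping exponent of $X$ is the $F$-pure threshold $\mathrm{fpt}(f)$, and it is $<1$ because $X$ non-ordinary means $X$ is not $F$-split. It therefore suffices to prove $\mathrm{fpt}(f)\ge 1-h/p$, and this is where I would use \cite{bs}: there the $F$-pure threshold of a Calabi--Yau hypersurface is computed in terms of $h$, the order of vanishing of the Hasse invariant on the versal deformation space of $X$ (which for a Calabi--Yau variety coincides with the Artin--Mazur height appearing in the proof of Proposition~\ref{pr:main}), and that computation delivers $\mathrm{fpt}(f)\ge 1-h/p$. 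Equivalently, since $\mathrm{fpt}(f)\ge\nu_1(f)/p$ with $\nu_1(f)=\max\{r:f^r\notin\mathfrak m^{[p]}\}$, what is needed is $f^{p-h}\notin\mathfrak m^{[p]}$. Putting the parts together, all simple $F$-jumping exponents of $X$ lie in $[\mathrm{fpt}(f),1-1/p]\subseteq[1-h/p,1-1/p]$.

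The step I expect to be the main obstacle is this last one: reading off the clean estimate $\mathrm{fpt}(f)\ge 1-h/p$ from the sharper but more delicate description in \cite{bs}, together with checking that the order of vanishing of the Hasse invariant used there agrees with the height invariant of Proposition~\ref{pr:main}. In the supersingular case $h=\infty$ the lower bound is vacuous, and the corollary reduces to the first two paragraphs.
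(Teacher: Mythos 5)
Your overall strategy --- Proposition~\ref{pr:main} for the upper endpoint, the Bhatt--Singh theorem for the lower endpoint --- is precisely the combination the paper uses. Your route to the upper bound is a slight detour: you pass through the statement that the level equals $2$ (via Propositions~\ref{gen} and~\ref{fje}), whereas one can read off ``no $F$-jumping exponent in $(1-1/p,1)$'' directly from the characterisation of the Hartshorne--Speiser--Lyubeznik number in \cite{MusZha} once it is known to equal $1$; both work, and your version proves the following corollary along the way.

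The genuine error is in your last paragraph. The $h$ in the statement is the order of vanishing of the Hasse invariant on the versal deformation space, and this is \emph{always} a finite positive integer for non-ordinary $X$: the Hasse invariant is not identically zero on the deformation space, so its order of vanishing at any point is finite. It agrees with the Artin--Mazur height when that height is finite, but when $X$ is supersingular the Artin--Mazur height is $\infty$ while $h$ remains finite. So there is no ``vacuous'' case and no supersingular exception to dispose of separately --- \cite[Thm.\ 1.1]{bs} gives $\mathrm{fpt}(f)=1-h/p$ with this finite $h$ in every non-ordinary case, and that is exactly the lower endpoint. Relatedly, the ``main obstacle'' you flag --- translating between the Hasse-invariant order $h$ and a height invariant --- does not in fact arise: the corollary is phrased directly in terms of the Hasse invariant, which is the quantity Bhatt--Singh compute, so no identification with the Artin--Mazur height is needed.
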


\begin{proof}
    This is a combination of proposition \ref{pr:main} and \cite[Thm.\
    1.1]{bs}.
\end{proof}

\begin{cor}
    Let $p>n^2-n-1$ then the level of a Calabi--Yau hypersurface
    $X=\Proj(R/fR)$ is 1 if and only if it is ordinary, and 2 otherwise.
\end{cor}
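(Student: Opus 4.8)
The strategy is to combine Proposition~\ref{gen} with Proposition~\ref{pr:main} and the characterisation of level via $F$-jumping exponents in Proposition~\ref{fje}. Proposition~\ref{gen} already gives one direction for free: $f$ has level $1$ if and only if $X$ is ordinary, with no hypothesis on $p$ needed. So the only thing left to establish is that when $X$ is \emph{not} ordinary and $p > n^2 - n - 1$, the level is exactly $2$, i.e.\ it cannot be $\ge 3$.

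First I would invoke Proposition~\ref{pr:main}: under the hypothesis $p > n^2 - n - 1$, the Hartshorne--Speiser--Lyubeznik number of $X$ is $1$. By the Mustaţă--Zhang description recalled just before Proposition~\ref{pr:main}, this means $\tau((f)^{1 - 1/p}) = \tau((f)^{1 - 1/p^{1+i}})$ for all $i \ge 1$; in the notation of the proof of Proposition~\ref{fje}, $\tau(I^{\lambda_1}) = \tau(I^{\lambda_{1+i}})$ for all $i\ge 1$. Translating back through $\tau(I^{\lambda_k}) = (f^{p^k - 1})^{[1/p^k]}$, this says $(f^{p-1})^{[1/p]} = (f^{p^{1+i}-1})^{[1/p^{1+i}]}$ for all $i \ge 1$, so the chain~(\ref{eqn:chain}) has stabilised by $e = 1$, whence the level is at most $2$. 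Combined with Proposition~\ref{gen} — a non-ordinary $X$ has level $\ne 1$ — the level is exactly $2$.

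For the converse packaging: if the level is $2$ then by Proposition~\ref{gen} $X$ is not ordinary (level $1 \iff$ ordinary), and if the level is $1$ then $X$ is ordinary by the same proposition. Since under our hypothesis on $p$ the level is always $1$ or $2$ (the previous paragraph rules out $\ge 3$ in the non-ordinary case, and level $1$ is automatic in the ordinary case), this exhausts all possibilities and gives the stated equivalence.

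The only real content is Proposition~\ref{pr:main}, which we are entitled to assume; modulo that, the argument is just assembling Propositions~\ref{fje}, \ref{gen} and~\ref{pr:main}, so there is no serious obstacle. The one point to be careful about is the bookkeeping between the Hartshorne--Speiser--Lyubeznik number being $1$ and the level being $2$ — the shift by one in the definition of level versus the indexing $\lambda_k = 1 - 1/p^k$ — but this is exactly the correspondence made explicit in the proof of Proposition~\ref{fje}.
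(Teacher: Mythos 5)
Your proposal is correct and follows exactly the same route as the paper, which simply says ``This is a combination of Proposition~\ref{pr:main} and Proposition~\ref{gen}''; you have merely unpacked that terse statement. The key bookkeeping you identify — that the Hartshorne--Speiser--Lyubeznik number being $1$ forces $\tau(I^{\lambda_1}) = \tau(I^{\lambda_{1+i}})$ for all $i\ge 1$, hence the chain~(\ref{eqn:chain}) stabilises by $e=1$ and the level is at most $2$, with Proposition~\ref{gen} then ruling level $1$ in or out according to ordinarity — is precisely the intended argument.
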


\begin{proof}
    This is a combination of proposition \ref{pr:main} and proposition
    \ref{gen}.
\end{proof}

\begin{rem}
    The connection between Hartshorne--Speiser--Lyubeznik numbers and the level
    described above can be exploited to give another proof of the main results
    of \cite{bbfy}.
\end{rem}

\section{Example of Fermat hypersurfaces}
In this section we describe how to construct the differential operators
guaranteed to exist by proposition \ref{pr:main} for the example of Fermat
hypersurfaces.

In \cite{bdsv} there is described an algorithm to construct an operator of
level~1 or level~2 for an elliptic curve. That algorithm works also in our case
but invokes a script of Katzman--Schwede \cite{KS} that requires two
computations of Gr\"obner bases (it is not exactly clear what the complexity of
the algorithm in \cite{bdsv} is).  The method to be described for the case
of Fermat hypersurfaces can be shown to work also for a general elliptic curve
using degree by degree arguments and has complexity $O(p^2)$. A computer
implementation seems to indicate that it works also for K3 surfaces defined by
smooth quartics in $\mathbf{P}^3$.

Fix $R=k[x_0,x_1,\dots,x_n]$ with $k$ a perfect field of characteristic $p>n\ge
2$. The ring $\mathrm{End}_{R^{p^e}}(R)$ is the ring extension of $R$ generated
by the operators $D_{t,i}$ for $i=0,\dots,n$ and $t=1,\dots,p^e-1$ where (see
\cite[IV, \S16]{ega4})
\[
    D_{t,i}(x_j^s)=\begin{cases}\binom{t}{s}x_i^{s-t} &\text{if $s\ge t$ and
        $i=j$,}\\
        0 &\text{otherwise.}\end{cases}
\]

Now let $f_n(x_0,\dots,x_n):=x_0^{n+1}+x_1^{n+1}+\dots+x_n^{n+1}$, and denote
by $X_{n,p}$ the Fermat hypersurface of Calabi-Yau type
\[ 
X_{n,p}=\mathrm{Proj}(R/f_n R)\subset \mathbf{P}_{\F_p}^n.
\]

Suppose that $p\equiv 1 \pmod {n+1}$ then the monomial $x_0^{p-1}x_1^{p-1}\dots
x_n^{p-1}$ appears in $f_n^{p-1}$ with non-zero coefficient (by an application
of Kummer's theorem for binomial coefficients).  Hence an operator of level 1 for
$f_n$ when $p\equiv 1 \pmod {n+1}$  is (up to a non-zero constant)
\[
    \Psi_1=\prod_{i=0}^n D_{p-1,i}.
\]
Now we will construct an operator of level~2 for all $f_n$. Set
\[
\alpha\coloneqq (n+1)(p^2-1)-n(n+1)p.
\]
For a given $j\in \{0,\dots,n\}$ then the monomial
\[
  m_j\coloneqq  x_0^{(n+1)p}x_1^{(n+1)p}\cdots x_{j-1}^{(n+1)p} x_j^{\alpha}
  x_{j+1}^{(n+1)p}\cdots x_{n}^{(n+1)p},
\]
appears in $f_n^{p^2-1}$ with non-zero coefficient (again by an application of
Kummer's theorem). Set $\beta\coloneqq p^2-1-(n+1)p$ and define an operator
\[
  \delta_j\coloneqq x_0^\beta x_1^\beta\cdots
x_{j-1}^\beta x_j^{2p^2-1-\alpha} x_{j+1}^\beta\cdots
x_n^\beta \circ \prod_{i=0}^n D_{p^2-1,i},
\]
where the $\circ$ indicates precomposition. Then $\delta_j(m_j)=x_j^{p^2}$ (up
to a non-zero constant). Suppose that the monomial
\begin{equation}\label{mono}
x_0^{p^2-p}x_1^{p^2-p}\cdots x_n^{p^2-p},
\end{equation}
does not appear in $f_n^{p^2-p}$. Then by the pigeonhole principle and
$f_n^{p^2-p}=(f_n^p)^{p-1}$, for every monomial appearing in $f_n^{p^2-p}$
there is a variable $x_k$ in that monomial whose power is $\ge p^2$. Hence we
can use the operators $\{\delta_j\}_j$ to construct an operator of level 2 for
$f_n$.  Otherwise, suppose that the monomial (\ref{mono}) does appear in
$f_n^{p^2-p}$ with a non-zero coefficient, then the monomial
$x_0^{p-1}x_1^{p-1}\cdots x_{m-1}^{p-1}$ appears in $f_n^{p-1}$ with non-zero
coefficient and $\Psi_1$ is an operator of level~1, and also of level~2:
\[
  \Psi_1(f_n^{p^2-1})=\Psi_1(f_n^{p^2-p}\cdot
  f_n^{p-1})=f_n^{p^2-p}\Psi_1(f_n^{p-1})=f_n^{p^2-p}.
\]

Explicitly, if we take the case $n=2$ and $p=5$ the above procedure obtains the
following operator of level 2 for $f_2$:
\begin{gather*}
 (x_0^{35}\delta_0+x_1^{35}\delta_1+x_2^{35}\delta_2) +(x_0^5x_1^{30} \delta_0
 +x_1^5x_2^{30} \delta_1+x_2^5 x_0^{30}\delta_2) \\ - ([x_0^{20}x_1^{15} +
 x_0^{20}x_2^{15} ]\delta_0 + [x_0^{15}x_1^{20} + x_1^{20}x_2^{15}]\delta_1 +
 [x_0^{15} x_2^{20} + x_1^{15}x_2^{20}] \delta_2)\\ + 2(x_0^5x_1^{15}x_2^{15}
 \delta_0 + x_0^{15}x_1^5 x_2^{15}\delta_1 + x_0^{15} x_1^{15} x_2^5 \delta_2).
\end{gather*}

\end{document}